\begin{document}

\numberwithin{equation}{section}
\newtheorem{theorem}{Theorem}
\newtheorem{proposition}[theorem]{Proposition}
\newtheorem{remark}[theorem]{Remark}
\newtheorem{lemma}[theorem]{Lemma}
\newtheorem{corollary}[theorem]{Corollary}
\newtheorem{definition}[theorem]{Definition}

\newcommand{\RR}{\mathbb{R}}
\newcommand{\NN}{\mathbb{N}}
\newcommand{\ZZ}{\mathbb{Z}}
\newcommand{\PP}{\mathbb{P}}
\newcommand{\EE}{\mathbb{E}}
\newcommand{\Var}{\mathbb{V}{\rm ar}}
\newcommand{\Cov}{\mathbb{C}{\rm ov}}
\newcommand{\Id}{\text{Id}}
\newcommand{\dive}{\operatorname{div}}
\newcommand{\dist}{\operatorname{dist}}
\newcommand{\Tr}{\operatorname{Tr}}

\newcommand{\dps}{\displaystyle}

\def\longrightharpoonup{\relbar\joinrel\rightharpoonup}

\title{Asymptotic behaviour of Green functions of divergence form operators  
with periodic coefficients}
\author{A. Anantharaman$^{1,4}$, X. Blanc$^{2}$ and F. Legoll$^{3,4}$\\
{\footnotesize $^1$ CERMICS, \'Ecole Nationale des Ponts et
Chauss\'ees, Universit\'e Paris-Est,}\\
{\footnotesize 6 et 8 avenue Blaise Pascal, 77455 Marne-La-Vall\'ee
Cedex 2, France}\\
{\footnotesize \tt ananthaa@cermics.enpc.fr}\\
{\footnotesize $^2$ CEA, DAM, DIF,
91297 Arpajon, France}\\
{\footnotesize \tt blanc@ann.jussieu.fr, Xavier.Blanc@cea.fr}\\
{\footnotesize $^3$ Institut Navier, LAMI, \'Ecole Nationale des Ponts et
Chauss\'ees, Universit\'e Paris-Est,}\\
{\footnotesize 6 et 8 avenue Blaise Pascal, 77455 Marne-La-Vall\'ee
Cedex 2, France}\\
{\footnotesize \tt legoll@lami.enpc.fr}\\
{\footnotesize $^4$ INRIA Rocquencourt, MICMAC team-project,}\\
 {\footnotesize Domaine de Voluceau, B.P. 105,
 78153 Le Chesnay Cedex, France}
}
\date{\today}

\maketitle

\begin{abstract}
This article is concerned with the asymptotic behaviour, at infinity and
at the origin, of Green functions of operators of the form $Lu = -\dive\left(A
  \nabla u\right),$ where $A$ is a periodic, coercive and bounded matrix. 
\end{abstract}

\section{Introduction}

The study of Green's functions for elliptic operators is an important
research subject. It is linked with many different fields, as for
instance homogenization
\cite{avellaneda-lin-87,avellaneda-lin-91,bonnetier,mprf,rate}, or 
the study of singular points~\cite{gruter,littmann}. 
The aim of the present article is to provide explicit bounds at infinity
for the Green function $G$ of a divergence-type elliptic operator with
periodic coefficients. 
Many arguments in this paper are already present in the literature in a
scattered manner, and our main contribution is to put them together in a
clear way. 
Our arguments also provide us with explicit bounds on $G$ in
the neighbourhood of the origin, where $G$ is singular. These latter
results are already described in a comprehensive way in the
literature. 

In all the article, we assume that $d\geq 2$ is the dimension of the
ambient space, and that (here, $\RR^{d\times d}$ is the space of square
matrices of size $d$) the field $A:\RR^d \longrightarrow \RR^{d\times
  d}$ satisfies
\begin{gather}
  \label{eq:A-periodique}
A \text{ is } \ZZ^d \text{ periodic, } 
\\
\label{eq:A-holder}
  A \text{ is } \delta - \text{H\"older continuous for some $\delta >0$,} 
\\
  \label{eq:A-coercive}
  \exists\alpha >0, \quad \forall \xi \in \RR^d, \quad \forall x\in
  \RR^d, \quad \xi^T A(x) \xi \geq \alpha |\xi |^2,
\end{gather}
where $| \cdot |$ is the Euclidean norm of $\RR^d,$ and
\begin{equation}
  \label{eq:A-bornee}
  A \in L^\infty\left(\RR^d, \RR^{d\times d}\right).
\end{equation}

We want to study the behaviour at infinity of the
Green function $G$ associated with the operator 
$$
L = -\dive\left(A \nabla\cdot\right),
$$
that is, the function $G : \RR^d \times \RR^d \longrightarrow \RR$
such that 
\begin{equation}
\label{eq:def-G}
-\dive_x\left( A(x) \nabla_x G(x,y) \right) = \delta_y(x).
\end{equation}
See~\eqref{eq:def-G-faible} below for a more precise formulation.
By behaviour at infinity, we mean the asymptotic of $G(x,y)$ as $|x-y|$
goes to infinity. 
This question has been widely studied in the literature. According
to~\cite[Theorem~13]{avellaneda-lin-87} (see also~\cite{kozlov}), we
have, if $d\geq 3$, 
\begin{equation}
\label{eq:decroissance-green-0}
\exists C, \quad \forall (x,y)\in \RR^d\times \RR^d, \quad |G(x,y)|\leq 
C |x-y|^{2-d}.
\end{equation}
In addition (see~\cite[Theorem 13]{avellaneda-lin-87}), we have, in the case $d=2$,
\begin{equation}
\label{eq:decroissance-green-0-2D}
\exists C, \quad \forall (x,y)\in \RR^2\times \RR^2, \quad |G(x,y)|\leq 
C \left(1+\log|x-y|\right).
\end{equation}
Note that these estimates characterize both the asymptotic behaviour of
$G$ at infinity (when $|x-y| \to \infty$) and at the origin (when
$|x-y| \to 0$). 
An important point here is that many papers consider only the case of
Green functions for operators $L$ defined in a {\em bounded} domain (the
equation~\eqref{eq:def-G} is then complemented by appropriate boundary
conditions). This is 
the case for instance of~\cite{dolzmann} and~\cite[Theorems~1.1
and~3.3]{gruter}. This is also the case
of~\cite[Theorem~13]{avellaneda-lin-87}, although a remark following
this Theorem indicates that the constant in the estimate can be chosen
independent of the domain. In~\cite[Theorem~3.3]{gruter}, bounds are
provided on $G$, its gradient and the second derivatives $\nabla_x
\nabla_y G$, in the case $d 
\geq 3$. A remark following that result points out that the constant in the
estimate of $G$ is independent of the domain, whereas the constants in
the estimates of the derivatives of $G$ a priori depend on the domain.

In this article, we also address the question of the decay of the
{\em derivatives} of $G$ at infinity. We have, as proved in
Propositions~\ref{pr:estimee-gradient}
and~\ref{pr:estimee-gradient-croise} below (the material is present
in~\cite{avellaneda-lin-87}, and also in~\cite{bonnetier}), for any
$d\geq 2$,
\begin{equation}
\label{eq:decroissance-green-1}
\exists C>0, \ \ \forall (x,y)\in \RR^d\times \RR^d, \ \
|\nabla_x G(x,y)| + |\nabla_y G(x,y)| \leq  C|x-y|^{1-d}
\end{equation}
and
\begin{equation}
\label{eq:decroissance-green-2}
\exists C>0, \quad \forall (x,y)\in \RR^d\times \RR^d, \quad
|\nabla_x\nabla_y G(x,y)| \leq  C|x-y|^{-d}.
\end{equation}

\medskip

A preliminary question, before
showing~\eqref{eq:decroissance-green-0},~\eqref{eq:decroissance-green-0-2D},~\eqref{eq:decroissance-green-1}
and~\eqref{eq:decroissance-green-2}, is the existence and uniqueness of
$G$ defined 
by~\eqref{eq:def-G}. This question is addressed in~\cite[Theorem
1.1]{gruter}, for the Green function in a bounded domain $\Omega \subset
\RR^d$ with homogeneous Dirichlet boundary conditions. An existence proof
is then provided for $G$ such that $\nabla_x G(\cdot,y) \in L^p(\Omega
\setminus B_r(y))$, for any $p>d/(d-1)$ and $r>0$. Actually, in~\cite{gruter},
only the case $d\geq 3$ is studied, but the existence proof carries
through to the case $d=2$. 
The uniqueness of $G$, under the assumption that $G\geq 0$, is also
proved in~\cite[Theorem 1.1]{gruter} for $d\geq 3$. The case $d=2$ is
not covered by their proof. 
A proof of uniqueness when $d=2$
can be found in the appendix of~\cite{kenig-ni}, both for a bounded
domain and for the whole space.

We finally mention that the case of non-divergence form operators (of
parabolic and elliptic type) has also been considered, see
e.g.~\cite{escaurazia}.

\medskip

The article is organized as follows. In Section~\ref{sec:definition}, we
give existence and uniqueness theorems for Green functions. In
Section~\ref{sec:asymptotique}, we state asymptotic properties on $G$
and its derivatives. Finally,
we give in Section~\ref{sec:extensions} some remarks about possible
extensions of the results stated in the present article.

\section{Definition of Green function}
\label{sec:definition}

In order to state the existence and uniqueness result for $G$ solution
of~\eqref{eq:def-G}, we first write a weak formulation: we look for $G :
\RR^d \times \RR^d \mapsto \RR$ such that
\begin{equation}
\label{eq:def-G-faible}
\forall y \in \RR^d, \quad \forall \varphi \in {\cal D}(\RR^d), \quad
\int_{\RR^d} \left( \nabla \varphi(x) \right)^T A(x) \nabla_x G(x,y)\, dx =
\varphi(y).
\end{equation}
In the sequel, we will need the definition of weak $L^p$ spaces,
which are special cases of Lorentz spaces: for any open subset $\Omega
\subset \RR^d$, for any $p \in [1,\infty]$, 
$$
L^{p,\infty}(\Omega) = \left\{f:\Omega \to \RR, \ f \text{ measurable,} \
\|f\|_{L^{p,\infty}(\Omega)} < \infty \right\},
$$
where
$$
\| f \|_{L^{p,\infty}(\Omega)} =  \sup_{t\geq 0} \left\{t \, \mu 
\left( \{ x \in \Omega, \ |f(x)|\geq t\} \right)^{1/p} \right\},
$$
where $\mu$ is the Lebesgue measure. We recall (see
e.g.~\cite[p.~8]{bergh-lofstrom} or~\cite{bonnetier}) that, for
any $0 < \beta < p-1$,
\begin{equation}
\label{eq:injection-lorentz}
C(p,\beta,\Omega) \, \| f \|_{L^{p-\beta}(\Omega)}
\leq
\| f \|_{L^{p,\infty}(\Omega)}
\leq 
\| f \|_{L^p(\Omega)},
\end{equation}
with $\dps C(p,\beta,\Omega) = 
\left( \frac{p}{\beta} \right)^{1/(p-\beta)} \
\left( \mu(\Omega) \right)^{\frac{-\beta}{p(p-\beta)}}$.

\begin{theorem}[Existence and uniqueness of $G$, $d\geq 3$]
\label{th:existence-unicite}
Let $d\geq 3$, and assume that $A$ satisfies~\eqref{eq:A-coercive}
and~\eqref{eq:A-bornee}. Then, equation~\eqref{eq:def-G-faible} has a
unique solution in $L^\infty_y\left(\RR^d,W^{1,1}_{x,\rm
    loc}(\RR^d)\right)$ such that  
\begin{equation}
\label{eq:G-lim}
\lim_{|x-y|\to \infty} G(x,y) =0.
\end{equation}
Moreover, $G$ satisfies the following estimate:
\begin{equation}
  \label{eq:estimee-G}
\forall q < \frac d {d-1}, \quad  \forall y \in \RR^d, \quad G(\cdot, y)
\in W_{\rm loc}^{1,q}\left(\RR^d\right) \cap W^{1,2}_{\rm
  loc}\left(\RR^d\setminus \{y\} \right)
\end{equation}
and
\begin{equation}
\label{eq:G-positive}
\exists C, \quad \forall (x,y)\in \RR^d\times \RR^d, \quad 
0 \leq G(x,y) \leq C |x-y|^{2-d}.
\end{equation}
\end{theorem}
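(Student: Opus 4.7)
The plan is to construct $G$ as the limit of Dirichlet Green functions $G_R$ on balls $B_R(0)$, exploiting the bounded--domain existence and pointwise estimate of Gr\"uter--Widman already cited in the introduction, and then to establish uniqueness via a regularity upgrade followed by the weak maximum principle. Since $A$ is only assumed coercive and bounded, all regularity will come from De Giorgi--Nash--Moser type tools.

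For existence, fix $y \in \RR^d$ and for each $R>|y|+1$ let $G_R(\cdot,y)$ denote the Dirichlet Green function of $L$ on $B_R(0)$. By \cite[Theorem~1.1]{gruter}, $G_R$ exists, is nonnegative, and satisfies $G_R(x,y) \leq C|x-y|^{2-d}$ with a constant $C = C(d,\alpha,\|A\|_\infty)$ independent of $R$. This pointwise bound gives uniform control of $G_R(\cdot,y)$ in $L^{d/(d-2),\infty}_{\rm loc}$, and a Caccioppoli estimate on annuli $B_{2r}(y)\setminus B_{r/2}(y)$, where the equation is homogeneous, then yields uniform $L^{d/(d-1),\infty}_{\rm loc}$ control of $\nabla_x G_R(\cdot,y)$; the Lorentz--Lebesgue embedding \eqref{eq:injection-lorentz} therefore bounds $G_R(\cdot,y)$ uniformly in $R$ in $W^{1,q}_{\rm loc}(\RR^d)$ for every $q<d/(d-1)$.

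One then extracts by a diagonal argument a subsequence with $G_{R_n}(\cdot,y) \longrightharpoonup G(\cdot,y)$ weakly in $W^{1,q}_{\rm loc}$ and strongly in the corresponding Sobolev $L^s_{\rm loc}$. Since every $\varphi \in \mathcal{D}(\RR^d)$ is eventually supported in $B_{R_n}$, passage to the limit in the weak formulation for $G_{R_n}$ gives \eqref{eq:def-G-faible} for $G$. Choosing the continuous representative on $\RR^d\setminus\{y\}$ (from the De Giorgi--Nash local H\"older bound), the estimate $0\leq G(x,y)\leq C|x-y|^{2-d}$ passes to the limit pointwise on $\RR^d\setminus\{y\}$, proving \eqref{eq:G-positive} and, since $d\geq 3$, immediately giving \eqref{eq:G-lim}. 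The additional regularity $G(\cdot,y) \in W^{1,2}_{\rm loc}(\RR^d\setminus\{y\})$ in \eqref{eq:estimee-G} follows from a Caccioppoli inequality applied to $G$ on bounded subsets of $\RR^d\setminus\{y\}$, using that $G$ is bounded and solves the homogeneous equation there.

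Uniqueness is the delicate step and I expect it to be the main obstacle. Given two solutions, the difference $H(\cdot,y):=G_1(\cdot,y)-G_2(\cdot,y)\in W^{1,1}_{\rm loc}$ is a distributional solution of $-\dive(A\nabla H(\cdot,y))=0$ on all of $\RR^d$ (the sources cancel) with $H(x,y)\to 0$ as $|x-y|\to\infty$. The strategy is to upgrade $H(\cdot,y)$ to $W^{1,2}_{\rm loc}$, which via De Giorgi--Nash yields local H\"older continuity; then $H(\cdot,y)$ is a continuous bounded weak solution of the homogeneous equation on $\RR^d$ tending to $0$ at infinity, and the weak maximum principle on $B_R(0)$ gives $\sup_{B_R}|H(\cdot,y)| \leq \sup_{\partial B_R}|H(\cdot,y)|\to 0$ as $R\to\infty$, forcing $H\equiv 0$. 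The principal technical difficulty is precisely this $W^{1,1}\to W^{1,2}$ upgrade for merely bounded measurable coefficients: the Sobolev embedding $W^{1,1}_{\rm loc}\hookrightarrow L^{d/(d-1)}_{\rm loc}$ is not by itself enough to run the Caccioppoli estimate for $d\geq 3$, and one must either iterate a Meyers-type gradient improvement on the homogeneous equation, or argue by mollification and commutator estimates, to raise the integrability of $\nabla H$ progressively up to $L^2_{\rm loc}$.
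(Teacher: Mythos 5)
Your existence argument is essentially the paper's: approximate by Dirichlet Green functions $G_R$ on balls $B_R$, invoke the $R$-independent Gr\"uter--Widman bounds (pointwise bound $0\le G_R\le C|x-y|^{2-d}$ and weak-$L^{d/(d-1)}$ gradient bound), and pass to the limit in the weak formulation \eqref{eq:def-G-faible}. The only real difference is how convergence is obtained: the paper observes via the maximum principle that $R\mapsto G_R$ is nondecreasing, so the whole family converges almost everywhere by monotonicity and \eqref{eq:G-positive} and \eqref{eq:G-lim} follow immediately, whereas you extract a subsequence by weak compactness and a diagonal argument. Both work; the monotone route is slightly cleaner (no subsequences, and the pointwise bound passes to the limit trivially). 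Your treatment of \eqref{eq:estimee-G} by Caccioppoli away from the pole matches the paper's (which defers to Gr\"uter--Widman).

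The uniqueness step is where you diverge from the paper, and where your proposal has a genuine gap --- one you name yourself but do not close. The paper does not run a maximum principle on large balls: it applies the corollary of Moser's Theorem~4, a Liouville-type statement asserting that a nonconstant entire solution of $\dive(A\nabla H)=0$ has oscillation on spheres of radius $r$ growing at least like a positive power of $r$; this contradicts \eqref{eq:G-lim}, so $H$ is constant, and the decay forces $H\equiv 0$. Your route requires first upgrading $H(\cdot,y)\in W^{1,1}_{\rm loc}$ to an energy ($W^{1,2}_{\rm loc}$) solution, and that step is left entirely open. Neither of the two strategies you sketch obviously works: Meyers' higher integrability improves $W^{1,2}$ to $W^{1,2+\varepsilon}$ and gives nothing for exponents below $2$, and for merely bounded measurable coefficients the passage from very weak ($W^{1,p}$, $p<2$) distributional solutions to energy solutions is exactly the territory of Serrin's pathological solutions, so it cannot be a soft mollification argument --- some structural input (the positivity of $G$, the decay at infinity, or the way $G_1,G_2$ were constructed) must enter. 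As written, the uniqueness proof is therefore incomplete. If you want to align with the paper, replace the maximum-principle endgame by Moser's oscillation-growth corollary (noting, as the paper does, that Harnack's inequality holds for non-symmetric $A$ as well), which turns the qualitative decay \eqref{eq:G-lim} directly into the conclusion.
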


\begin{proof}
First, note that, according to~\cite[Theorem 8.24]{gilbarg-trudinger},
the function $G$ is H\"older continuous with respect to $x$ and $y$
whenever $x\neq y$. The same property holds for $G_R$ defined below.

Let $R>0$. We first define $G_R$ as the Green
function of the operator $-\dive\left(A\nabla \cdot\right)$ on the ball
$B_R = B_R(0)$ with homogeneous Dirichlet boundary conditions, that is, 
\begin{equation}
\label{eq:def-G-faible-R}
\forall y \in B_R, \quad \forall \varphi \in {\cal D}(B_R), \quad
\int_{B_R} \left( \nabla \varphi(x) \right)^T A(x) \nabla_x G_R(x,y) dx =
\varphi(y),
\end{equation}
and $G_R(x,y) = 0$ if $|x|=R$. Applying~\cite[Theorem 1.1]{gruter}, we
know that such a $G_R$ exists, and satisfies
\begin{gather}
\label{eq:borne-G-1}
\forall y \in B_R, \quad \|G_R(\cdot,y)\|_{L^{\frac d {d-2},\infty}(B_R)}
\leq C,
\\
\label{eq:borne-G-2}
\forall y \in B_R, \quad \|\nabla_x G_R(\cdot,y)\|_{L^{\frac d {d-1},\infty}(B_R)}
\leq C,
\end{gather}
and
\begin{equation}
\label{eq:borne-G-3}
\forall (x,y) \in B_R\times B_R, \quad 
0 \leq G_R(x,y) \leq \frac{C}{|x-y|^{d-2}},
\end{equation}
where $C>0$ does not depend on $R$ and $y$. 

Next, we note that if $R'>R$, then, due to the maximum principle, we
have $G_{R'} \geq G_R$ in $B_R\times B_R$. Thus, $G_R$ is a
non-decreasing function of $R$. With the help of~\eqref{eq:borne-G-3},
this implies that the function $G_R$
converges almost everywhere to some function $G$, defined on $\RR^d
\times \RR^d$, and that satisfies~\eqref{eq:G-positive}. 
This implies~\eqref{eq:G-lim}. 
In addition, we deduce from~\eqref{eq:borne-G-3} that $G_R$ converges to
$G$ in $L^p_{\rm loc}(\RR^d \times \RR^d)$, for any
$p<d/(d-2)$, and that, for any $y \in \RR^d$, the function $G_R(\cdot,y)$
converges to $G(\cdot,y)$ in $L^p_{\rm loc}(\RR^d)$, for any
$p<d/(d-2)$.  

In view of~\eqref{eq:borne-G-2} and~\eqref{eq:injection-lorentz}, we see
that, for any bounded domain $\Omega \subset \RR^d$, and for any $q < d/(d-1)$, there
exists $C(\Omega,q,d)$ such that
$$ 
\forall R \text{ s.t. $\Omega \subset B_R$},  \quad \forall y \in B_R, \quad
\left\| \nabla_x G_R(\cdot,y) \right\|_{L^q(\Omega)} \leq C(\Omega,q,d).
$$
Hence, extracting a subsequence if necessary, $\nabla_x G_R(\cdot,y)$
converges weakly in $\left( L^q(\Omega) \right)^d$ to some $T \in
\left( L^q(\Omega) \right)^d$. Recall now that $G_R(\cdot,y)$ converges
to $G(\cdot,y)$ in $L^p_{\rm loc}(\RR^d)$, for any $p<d/(d-2)$. Hence $T
= \nabla_x G_{|\Omega}$, and $\nabla_x G_R(\cdot,y)$ converges to $\nabla_x G$
weakly in $\left( L^q(\Omega) \right)^d$, for any bounded domain
$\Omega$ and any $q < d/(d-1)$. 
Passing to the limit in~\eqref{eq:def-G-faible-R}, we see that $G$ is a
solution to~\eqref{eq:def-G-faible}.

Finally, the bounds~\eqref{eq:borne-G-1} and~\eqref{eq:borne-G-2} imply,
together with~\eqref{eq:injection-lorentz},
that $G\in L^\infty_y\left(\RR^d, W_{x,\rm loc}^{1,1}(\RR^d)\right)$. We
have thus proved the existence of $G$. 

\smallskip

Property~\eqref{eq:estimee-G} is proved in~\cite[Theorem 1.1]{gruter},
and its proof does not depend on the fact that the domain used there is
bounded. Note that we have already proved part of this
property. Indeed, as pointed above, 
for any $y \in \RR^d$, we have $G(\cdot,y) \in L^p_{\rm loc}(\RR^d)$ for any
$p<d/(d-2)$ and $\nabla_x G(\cdot,y) \in \left( L^q_{\rm loc}(\RR^d)
\right)^d$ for any $q<d/(d-1)$, thus $G(\cdot,y) \in W^{1,q}_{\rm
  loc}(\RR^d)$ for any $q<d/(d-1)$.

\medskip

In order to prove uniqueness, we assume that $G_1$ and $G_2$ are
two solutions, and point out that $H = G_1 - G_2$ satisfies $\dive_x
\left(A\nabla_x H\right) = 0$ for any $y\in \RR^d$. Fixing $y$, we apply
the corollary of~\cite[Theorem 4]{moser}, 
which implies that, if $H$ is not constant, then 
$\sup\left\{H(x,y), |x-y|=r \right\} - \inf\left\{H(x,y),
  |x-y|=r \right\}$ must grow at least like a positive power of $r$ as $r\to
\infty$. This latter behaviour is in contradiction
with~\eqref{eq:G-lim}. Thus $H=G_1-G_2$ is constant, and~\eqref{eq:G-lim}
implies that $G_1 \equiv G_2$.

Note finally that the corollary of~\cite[Theorem 4]{moser} is stated in
the case when $A$ 
is symmetric, but the same result holds in the non-symmetric
case. Indeed, Harnack's inequality is still valid in such a case,
see
e.g.~\cite[Theorem 8.20]{gilbarg-trudinger},~\cite[Theorem
5.3.2]{morrey} or~\cite{kenig-koch}. 
\end{proof}

\begin{theorem}[Existence and uniqueness of $G$, $d=2$]
\label{th:existence-unicite-2d}
Let $d=2$, and assume that $A$ satisfies~\eqref{eq:A-coercive}
and~\eqref{eq:A-bornee}. Then, equation~\eqref{eq:def-G-faible} has a
unique (up to the addition of a constant) 
solution in $L^\infty_{y,\rm loc}\left(\RR^d, W^{1,1}_{x,\rm
    loc}(\RR^d)\right)$ such that 
\begin{equation}
\label{eq:G-positive-2d}
\exists C>0, \quad \forall (x,y) \in \RR^d\times \RR^d, \quad |G(x,y)|
\leq C\left( 1 + \bigl|\log|x-y|\bigr| \right).
\end{equation}
Moreover, $G$ satisfies the following estimate:
\begin{equation}
  \label{eq:estimee-G-2d}
\forall q < 2, \quad  \forall y \in \RR^d, \quad G(\cdot, y)
\in W_{\rm loc}^{1,q}\left(\RR^d\right) \cap W^{1,2}_{\rm
  loc}\left(\RR^d\setminus \{y\} \right).
\end{equation}
\end{theorem}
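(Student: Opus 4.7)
The plan is to adapt the proof of Theorem~\ref{th:existence-unicite} to $d=2$. The two main modifications are that the Dirichlet Green functions $G_R$ on $B_R$ are \emph{not} uniformly bounded in $R$ (they grow logarithmically in $R$), so the monotone pointwise limit argument of the previous theorem is unavailable, and that the limit $G$ is only defined up to an additive constant. However, the gradient estimate $\|\nabla_x G_R(\cdot,y)\|_{L^{2,\infty}(B_R)}\leq C$ of~\cite[Theorem~1.1]{gruter} does hold uniformly in $R$ and $y$ also in $d=2$; this is the uniform control I would exploit.

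First, I would fix a bounded reference set $D_0\subset\RR^2$ and, for $R$ large enough that $D_0\subset B_R$, define
\[
\tilde G_R(x,y) = G_R(x,y) - \frac{1}{|D_0|}\int_{D_0} G_R(x',y)\,dx'.
\]
Using the uniform gradient bound (and~\eqref{eq:injection-lorentz} to convert it into a uniform $L^q$-bound for any $q<2$) together with the Poincar\'e--Wirtinger inequality on an increasing family of bounded sets, the family $\tilde G_R(\cdot,y)$ is bounded in $W^{1,q}_{\rm loc}(\RR^2)$ uniformly in $R$, for every $q<2$ and every $y$ in a compact set. Extracting a weakly convergent subsequence yields a limit $G(\cdot,y)\in W^{1,q}_{\rm loc}(\RR^2)$; passage to the weak limit in~\eqref{eq:def-G-faible-R} shows that $G$ solves~\eqref{eq:def-G-faible}, and the interior $W^{1,2}_{\rm loc}(\RR^2\setminus\{y\})$ statement of~\eqref{eq:estimee-G-2d} then follows from a Caccioppoli argument once~\eqref{eq:G-positive-2d} is known. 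The bound~\eqref{eq:G-positive-2d} itself is obtained by writing the pointwise Green-function estimate on $B_R$ in the form $0\leq G_R(x,y)\leq C\bigl(1+\bigl|\log|x-y|\bigr|\bigr)+c_R$, where the singular part is $R$-independent and $c_R$ depends only on $R$, and noting that the offset $c_R$ is absorbed by the subtraction in the definition of $\tilde G_R$.

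For uniqueness, suppose $G_1,G_2$ both satisfy~\eqref{eq:def-G-faible} together with~\eqref{eq:G-positive-2d}. Then for each fixed $y$, $H=G_1-G_2$ is a weak solution of $\dive(A\nabla_x H)=0$ in all of $\RR^2$ (the singularities cancel) with $|H(x,y)|\leq C(1+\bigl|\log|x-y|\bigr|)$. The corollary of~\cite[Theorem~4]{moser}, still valid for non-symmetric $A$ thanks to Harnack (as recorded in the proof of Theorem~\ref{th:existence-unicite}), asserts that a non-constant $H$ must have oscillation on $|x-y|=r$ growing at least like a positive power of $r$; this contradicts the logarithmic upper bound, so $H$ must be constant in $x$. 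Alternatively, one may invoke directly the two-dimensional Liouville-type argument given in the appendix of~\cite{kenig-ni}.

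The principal obstacle is arranging the normalization so that the limit $G$ inherits the sharp bound~\eqref{eq:G-positive-2d}: this requires splitting the Gr\"uter--Widman pointwise estimate on $G_R$ into an $R$-independent singular part and an $R$-dependent additive constant, which amounts to inspecting the two-dimensional analogue of the proof of~\cite[Theorem~1.1]{gruter} (an adaptation flagged as valid in the introduction). Once this splitting is in hand, compactness and the Liouville step are essentially routine.
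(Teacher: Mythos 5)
Your construction of the approximate Green functions $G_R$, the weak-compactness step, the treatment of \eqref{eq:estimee-G-2d}, and the Liouville-type uniqueness argument all match the paper's strategy. The genuine gap is in your derivation of \eqref{eq:G-positive-2d}. You propose to write $0\leq G_R(x,y)\leq C\left(1+\bigl|\log|x-y|\bigr|\right)+c_R$ and claim that the offset $c_R$ is absorbed by subtracting the mean over $D_0$. But this is only a one-sided upper bound together with $G_R\geq 0$; after subtracting $m_R=\frac{1}{|D_0|}\int_{D_0}G_R(x',y)\,dx'$ you obtain $-m_R\leq \tilde G_R\leq C\left(1+\bigl|\log|x-y|\bigr|\right)+c_R-m_R$, and the lower bound $-m_R$ is of order $-\log R$ (already for $L=-\Delta$, where $m_R=\frac{1}{2\pi}\log R+O(1)$), hence not uniform in $R$. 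To make the absorption work you would need a matching \emph{lower} bound $G_R(x,y)\geq c_R-C\left(1+\bigl|\log|x-y|\bigr|\right)$ with the \emph{same} $c_R$, i.e.\ a two-sided oscillation estimate on $G_R$ --- and that is precisely the hard part, not contained in the Gr\"uter--Widman / Dolzmann--M\"uller upper estimate. The paper avoids this entirely: it proves \eqref{eq:G-positive-2d} for the limit $G$ directly from the uniform weak-$L^2$ gradient bound, by (i) controlling the circle averages $f(R)=\frac{1}{2\pi R}\int_{\partial B_R(y)}|G(x,y)|\,dx$ through $|f(R)-f(R')|\leq CR/R'$ and a dyadic telescoping, (ii) converting this into $L^2$ bounds on annuli via a Sobolev inequality with cutoff functions, and (iii) invoking Moser's local sup bound \cite{moser} for $L$-harmonic functions on annuli. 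You need to supply an argument of this type (or prove the two-sided estimate on $G_R$ by the same means); as written, the key estimate is assumed rather than proved.

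A second, smaller issue: you normalize by subtracting the mean over a \emph{fixed} set $D_0$, independent of $y$. The resulting $G$ cannot satisfy \eqref{eq:G-positive-2d} uniformly in $y$: for $L=-\Delta$ one gets $G(x,y)=-\frac{1}{2\pi}\log|x-y|+\frac{1}{2\pi|D_0|}\int_{D_0}\log|x'-y|\,dx'$, whose second term grows like $\log|y|$ as $y\to\infty$, so $|G(x,y)|$ is unbounded on the set $\left\{|x-y|=1\right\}$. The normalization must depend on $y$, as in the paper's choice $\int_{B_1(y)}G(x,y)\,dx=0$, which combined with Poincar\'e--Wirtinger and the uniform $L^1$ gradient bound gives the $y$-uniform starting estimate $\int_{B_1(y)}|G(\cdot,y)|\leq C$. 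Finally, note that the uniform $L^{2,\infty}$ gradient bound in $d=2$ is not literally in \cite{gruter} (which is stated for $d\geq 3$); the paper takes it from Section~6 of \cite{dolzmann}, a point you only partly acknowledge.
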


\begin{proof}
The proof of this result may be found in the appendix
of~\cite{kenig-ni}. However, for the sake of completeness, we provide an
alternative proof. This proof, in contrast to that of~\cite{kenig-ni},
relies on basic tools of analysis of PDEs. 

\medskip
We use the same strategy as in the proof of
Theorem~\ref{th:existence-unicite}, defining first the Green function
$G_R$ of the operator $L$ on $B_R$. However, we cannot simply apply the
results of~\cite{gruter} to define $G_R$, as those results are stated in
dimension $d \geq 3$. It is 
possible to adapt the proof of~\cite[Theorem 1.1]{gruter} to the
two-dimensional case, but a simpler proof consists in following the
approach of Section~6 
of~\cite{dolzmann}. These results give the 
existence and uniqueness of $G_R$ solution to~\eqref{eq:def-G-faible-R} 
in the ball $B_R = B_R(0) \subset \RR^2$,
with the homogeneous Dirichlet boundary conditions $G_R(x,y) = 0$ if
$|x|=R$, in $W^{1,p}(B_R)$ for any $p<2$. In addition, it is shown
in~\cite[Section 6]{dolzmann} that estimate~\eqref{eq:borne-G-2} holds, 
namely 
\begin{equation}
\label{eq:bibi2}
\forall y \in B_R, \quad \|\nabla_x G_R(\cdot,y)\|_{L^{2,\infty}(B_R)} \leq C
\end{equation}
for a constant $C$ independent of $R$ and $y$. 

\medskip

\noindent{\bf Step 1: passing to the limit $R \to \infty$ on $G_R$}

\medskip

Consider the domain $\Omega = B_{R'}$, with $R'$ fixed, and consider
next $R > R'$.  
Applying~\eqref{eq:injection-lorentz}
to $\nabla_x G_R(\cdot,y)$ on $\Omega$, we see that~\eqref{eq:bibi2}
implies that $\nabla_x G_R$ is
bounded in $\left( L^q(B_{R'} \times B_{R'}) \right)^2$ for any $q < 2$,
independently of $R$. Hence, extracting a subsequence if necessary,
$\nabla_x G_R$ 
converges weakly in $(L^q(B_{R'} \times B_{R'}))^2$ to $T \in
(L^q(B_{R'} \times B_{R'}))^2$. Now, we have, in the sense of
distribution,
$$
\partial_{x_1} \partial_{x_2} G_R = \partial_{x_2}\partial_{x_1}
G_R.
$$
This property passes to the limit, so that $\partial_{x_1} T_2
= \partial_{x_2} T_1$. This implies that $T = \nabla_x G$ for some
$G \in W^{1,q}\left(B_{R'} \times B_{R'}\right)$. Next, we point out
that this limit does not depend on $R'$ in the sense that if $R'' > R'$,
then $\nabla_x G'$ obtained in $B_{R'}$ is equal to $\nabla_x
G''_{|B_{R'}},$ where $\nabla_x G''$ is obtained in $B_{R''}$. 
Hence $G \in W^{1,q}_{\rm loc}\left(\RR^2\times
\RR^2\right) \subset L^q_{y,\rm loc}\left(\RR^2, W^{1,1}_{x,\rm
  loc}(\RR^2)\right)$. 
Passing to the limit in~\eqref{eq:def-G-faible-R}, we obtain that $G$ is
a solution to~\eqref{eq:def-G-faible}. 
Until now, the function $G(\cdot,y)$ is only
determined up to a constant. We fix this constant by choosing
$G(\cdot,y)$ such that 
\begin{equation}
\label{eq:choice-0}
\int_{B_1(y)} G(x,y) \, dx = 0.
\end{equation}

\medskip 

To prove the existence of a function $G$ satisfying the claimed
properties, we are now left with showing that the function $G$ that we
have built satisfies~\eqref{eq:G-positive-2d}
and~\eqref{eq:estimee-G-2d}. 

\medskip 

\noindent{\bf Step2: proving that $G$ satisfies~\eqref{eq:estimee-G-2d}}

\medskip

By construction, we have 
$G(\cdot,y) \in W^{1,q}(\Omega)$, for any $q<2$ and any bounded domain
$\Omega$. The proof of the fact that $G(\cdot,y) \in W^{1,2}_{\rm loc}(\RR^2
  \setminus y)$ follows the same lines as the proof given
  in~\cite[Theorem 1.1]{gruter}, which does not depend on the fact that
  the domain used there is bounded, nor on the fact that the dimension
  there is $d \geq 3$. We thus have proved~\eqref{eq:estimee-G-2d}.

\medskip

\noindent{\bf Step 3: proving that $G$ satisfies~\eqref{eq:G-positive-2d}}

\medskip

We first infer
from~\eqref{eq:bibi2} and~\eqref{eq:injection-lorentz} that, for any
bounded domain $\Omega \subset B_R$ and any $y \in B_R$, we have
$$
\frac{2}{\sqrt{\mu(\Omega)}} \
\|\nabla_x G_R(\cdot,y)\|_{L^1(\Omega)} \leq C
$$
for a constant $C$ independent of $R$, $\Omega$ and $y$. Since $\nabla_x
G_R(\cdot,y)$ weakly converges to $\nabla_x G(\cdot,y)$, we deduce that
\begin{equation}
\label{eq:estimee-gradient-G-1}
\frac{2}{\sqrt{\mu(\Omega)}} \
\|\nabla_x G(\cdot,y)\|_{L^1(\Omega)} \leq C
\end{equation}
for a constant $C$ independent of $\Omega$ and $y$. Note that this
implies that $G \in L^\infty_{y,\rm loc} \left(\RR^d, W^{1,1}_{x,\rm
    loc}(\RR^d)\right)$, as claimed in the theorem.

Second, we apply Poincar\'e-Wirtinger inequality to $G(\cdot,y)$
on the set $B_1(y)$: using~\eqref{eq:choice-0}, we have
$$
\int_{B_1(y)} | G(x,y)|dx 
\leq C 
\int_{B_1(y)} |\nabla_x G (x,y)|dx.
$$
Applying~\eqref{eq:estimee-gradient-G-1} with $\Omega = B_1(y)$,
we deduce that
\begin{equation}
  \label{eq:borne-B1}
  \int_{B_1(y)} | G(x,y)|dx \leq C,
\end{equation}
where $C$ does not depend on $y$. 

We next define, for any $R>0,$ the function
$$
f(R) = \frac 1 {2\pi R} \int_{\partial B_R(y)} | G(x,y)|dx,
$$
where $dx$ denotes the Lebesgue measure on the circle $\partial
B_R(y)$. Note that $f$ depends on $y$, but we keep this dependency
implicit in our notation. In the sequel of the proof, we first show a
bound on $f$ (step 3a), and then deduce from that bound a bound on $G$
(step 3b).  

\medskip 

\noindent{\bf Step 3a: bound on $f$}

We have, for any $R>R'>0$:
\begin{multline}
\label{eq:borne-R-diff}
\left| f(R) - f(R') \right| 
\leq 
\int_{R'}^{R} \left| f'(r) \right| dr 
\leq 
\int_{R'}^{R} \frac{1}{2\pi r} \int_{\partial B_r(y)} 
\left| \nabla_x G(x,y) \right| dx dr 
\\
\leq 
\frac{1}{2\pi R'} \int_{B_R(y) \setminus B_{R'}(y)} |\nabla_x G(x,y)| dx 
\leq
C \frac{\sqrt{R^2 - {R'}^2}}{R'} = C \frac{R}{R'},
\end{multline}
where we have again used~\eqref{eq:estimee-gradient-G-1} and
where the constant $C$ does not depend on $y$. This implies that $f(R)$
is bounded independently of $R$ and $y$ for $R\in (1/2, 1)$. Indeed, for
such an $R$, we rewrite~\eqref{eq:borne-R-diff} as
$f(R) \leq f(R') + C R/R'$ (recall that $f$ is non-negative),
and integrate with respect to $R'$ between $1/4$ and $1/2$, finding
$$
\frac{1}{4} f(R) \leq \int_{1/4}^{1/2} f(R')dR' + CR.
$$
Using~\eqref{eq:borne-B1}, we infer
\begin{equation}
  \label{eq:borne_R_pas_trop_petit}
\forall R\in \left[\frac12,1\right], \quad  f(R) \leq C, 
\end{equation}
for some constant $C$ independent of $R$ and $y$. Next, we consider two
different cases: $R>1$ and $R<1/2$.

\begin{itemize}
\item Case $R>1$: in such a case, we define $p\in \NN$ such that 
$$
\frac12 < \frac{R}{2^p} \leq 1,
$$
that is, $p$ is the integer part of $\frac{\log R}{\log 2}$, which reads
$\frac{\log R}{\log 2} \leq p < \frac{\log R}{\log 2}+1$. We then
apply~\eqref{eq:borne-R-diff} with $R = 2^{-j} R,$ $R' = 2^{-j-1} R$,
finding $$
\left| f \left(\frac R {2^j}\right) - f \left(\frac R {2^{j+1}} \right) \right|
\leq C,
$$
where $C$ is a constant which does not depend on $R$, $j$, nor on
$y$. We sum up all these inequalities for $0 \leq j \leq p-1$, and
obtain  
$$
f(R) \leq f\left(\frac{R}{2^p}\right) + Cp.
$$
Recalling~\eqref{eq:borne_R_pas_trop_petit} and the definition of $p$,
we infer 
\begin{equation}
  \label{eq:borne_R_grand}
  |f(R)| \leq C \left( 1 + |\log(R)|\right),
\end{equation}
where $C$ is independent of $R$ and $y$.

\item Case $R<1/2$: the approach is similar to the preceding case. We
  define $p\in \NN$ such that
$$
\frac12 \leq 2^p R < 1,
$$
that is, $p$ is the integer part of $-\frac{\log R}{\log 2}-1$. We
apply~\eqref{eq:borne-R-diff} with $R'=2^jR$ and $R = 2^{j+1}R$, finding
$$
\left| f(2^j R) - f(2^{j+1}R) \right| \leq C.
$$
We sum this with respect to $0\leq j\leq p-1$, and find
that~\eqref{eq:borne_R_grand} is again valid in this case.
\end{itemize}
Collecting the result of the above two cases, we find that
\begin{equation}
  \label{eq:borne-R-partout}
  \forall R>0, \quad   |f(R)| \leq C \left( 1 + |\log(R)|\right),
\end{equation}
where the constant $C$ does not depend on $R$ nor on $y$.

\medskip 

\noindent{\bf Step 3b: bound on $G$}

We first make use of~\eqref{eq:borne-R-partout} to obtain a bound on the
$L^1$ norm of $G$ in any annulus. For any $\beta \leq \gamma$, we indeed
have 
$$
\| G(\cdot,y) \|_{L^1(B_\gamma(y) \setminus B_\beta(y))} 
= 
2 \pi \int_\beta^\gamma r f(r) dr,
$$
hence, using~\eqref{eq:borne-R-partout}, we obtain
\begin{equation}
\label{eq:merc1}
\| G(\cdot,y) \|_{L^1(B_\gamma(y) \setminus B_\beta(y))} 
\leq
C \int_\beta^\gamma r \left( 1 + |\log(r)| \right) dr.
\end{equation}
Consider now $R \geq 1/2$. Then $3R \geq 2R \geq 1$, and~\eqref{eq:merc1}
implies
\begin{multline}
\label{eq:merc2}
\forall R \geq 1/2, \quad
\| G(\cdot,y) \|_{L^1(B_{3R}(y) \setminus B_{2R}(y))} 
\leq
C \int_{2R}^{3R} r \left( 1 + \log(r) \right) dr
\\
\leq
3 C R^2 \left( 1 + \log(3R) \right)
\leq
\mathcal{C} R^2 \left( 1 + \left| \log(R) \right| \right),
\end{multline}
for some $\mathcal{C}$ independent of $R$ and $y$.
In turn, if $R \leq 1/3$, then~\eqref{eq:merc1} implies
\begin{multline}
\label{eq:merc3}
\forall R \leq 1/3, \quad
\| G(\cdot,y) \|_{L^1(B_{3R}(y) \setminus B_{2R}(y))} 
\leq
C \int_{2R}^{3R} r \left( 1 - \log(r) \right) dr
\\
\leq
3 C R^2 \left( 1 - \log(3R) \right)
\leq
\mathcal{C} R^2 \left( 1 + \left| \log(R) \right| \right),
\end{multline}
for some $\mathcal{C}$ independent of $R$ and $y$.

Next, we recall that,
according to Sobolev imbeddings (see for
instance~\cite[Theorem~7.10]{gilbarg-trudinger}), we have 
$$
\forall p<2, \quad \forall u \in W^{1,p}(\RR^2), \quad
\|u\|_{L^\frac{2p}{2-p}(\RR^2)} \leq C_p \| \nabla u\|_{L^p(\RR^2)}.
$$
We apply this inequality to $u = G(\cdot,y) \chi_R$, where $\chi_R$
is a cut-off function satisfying 
$$
\chi_R \in {\cal D}(\RR^2), \quad |\nabla \chi_R|\leq \frac C R, \quad
\chi_R = 0 \text{ outside } B_{3R}(y), \quad \chi_R = 1 \text{ in
} B_{2R}(y).
$$
We find, for $p=1$, that
\begin{multline}
\| G(\cdot,y) \|_{L^2(B_{2R}(y)\setminus B_R(y))} 
\leq
\| u \|_{L^2(\RR^2)}
\leq 
C \| \nabla u \|_{L^1(\RR^2)}
\\
\leq
C \| \nabla_x G(\cdot,y) \|_{L^1(B_{3R}(y))} + \frac{C}{R} 
\| G(\cdot,y) \|_{L^1(B_{3R}(y)\setminus B_{2R}(y))}.
\label{eq:merc6}
\end{multline}
The first term of the right-hand side is bounded
using~\eqref{eq:estimee-gradient-G-1}, that yields 
\begin{equation}
\label{eq:merc4}
\| \nabla_x G(\cdot,y) \|_{L^1(B_{3R}(y))} \leq CR.
\end{equation}
The second term is bounded using~\eqref{eq:merc1},~\eqref{eq:merc2}
and~\eqref{eq:merc3}. If $R \geq 1/2$ or $R \leq 1/3$, we indeed see
from~\eqref{eq:merc2} and~\eqref{eq:merc3} that
\begin{equation}
\label{eq:merc5}
\frac{C}{R} \| G(\cdot,y) \|_{L^1(B_{3R}(y) \setminus B_{2R}(y))} 
\leq
\mathcal{C} R \left( 1 + \left| \log(R) \right| \right).
\end{equation}
In turn, if $1/3 \leq R \leq 1/2$, then we deduce from~\eqref{eq:merc1}
that
$$
\frac{C}{R} \| G(\cdot,y) \|_{L^1(B_{3R}(y) \setminus B_{2R}(y))} 
\leq
3C \| G(\cdot,y) \|_{L^1(B_{3/2}(y) \setminus B_{2/3}(y))} 
\leq \mathcal{C},
$$
and hence~\eqref{eq:merc5} is again valid. 

Collecting~\eqref{eq:merc6},~\eqref{eq:merc4} and~\eqref{eq:merc5}, we
obtain 
$$
\|  G \|_{L^2(B_{2R}(y)\setminus B_R(y))} 
\leq C R + C R \left| \log(R) \right|.
$$
Finally, we apply~\cite[Theorem 2]{moser} (see
also~\cite[Theorem~8.15]{gilbarg-trudinger}), 
which implies that, for
any $v \in W^{1,1}_{\rm loc}(\RR^2)$ such that $Lv = 0$ in
$B_{4R}(y)\setminus B_{R/2}(y)$, we have 
$$
\sup_{B_{2R}(y)\setminus B_R(y)} v \leq \frac{C}{R} 
\| v \|_{L^2(B_{2R}(y)\setminus B_R(y))}.
$$
Applying this to $ G(\cdot,y)$ and $- G(\cdot,y)$, we find
\begin{equation}
\label{eq:bousin}
\sup_{B_{2R}(y)\setminus B_R(y)} | G(\cdot,y)|\leq C \left(1+\left|
    \log(R) \right| \right).
\end{equation}
The function $G$ hence satisfies~\eqref{eq:G-positive-2d}. This
concludes the proof of the existence of a function $G$ satisfying the
properties claimed in Theorem~\ref{th:existence-unicite-2d}.

\bigskip

To prove the uniqueness of $G$ (up to a constant), we follow the same
argument as in the case $d \geq 3$ (see
Theorem~\ref{th:existence-unicite}). Assume that $G_1$ and $G_2$ are two
solutions. We point out that $H = G_1 - G_2$ satisfies $\dive_x 
\left(A\nabla_x H\right) = 0$ for any $y\in \RR^2$. Fixing $y$, we apply
the corollary of~\cite[Theorem 4]{moser}, 
which implies that, if $H$ is not constant, then 
$\sup\left\{H(x,y), |x-y|=r \right\} - \inf\left\{H(x,y),
  |x-y|=r \right\}$ must grow at least like a positive power of $r$ as $r\to
\infty$. This latter behaviour is in contradiction
with~\eqref{eq:G-positive-2d}. Thus $H=G_1-G_2$ is constant. This
concludes the proof of Theorem~\ref{th:existence-unicite-2d}.
\end{proof}

\begin{remark}
The above proof can be adapted to the case of the Green function $G_R$ on
the bounded domain $B_R$, i.e. the solution
to~\eqref{eq:def-G-faible-R}. 
We hence obtain 
$$
\forall (x,y) \in B_R \times B_R, \quad \left| G_R(x,y)\right| \leq
\mathcal{C}_R  + \mathcal{C} \bigl|\log|x-y|\bigr|,
$$
thus recovering the result of~\cite[Section 6]{dolzmann}.
Note that the constant $\mathcal{C}_R$ in the above bound a priori depends on
$R$. Think indeed for instance of the case $L = -\Delta$, where $G_R(x,0) =
-\log|x| + \log R$.
\end{remark}

\section{Asymptotic behaviour}
\label{sec:asymptotique}

We now give some results about the asymptotic behaviour (at infinity and
at the origin) of the Green function $G$. 
First, we note that, collecting~\eqref{eq:G-positive}
and~\eqref{eq:G-positive-2d}, we have the following: 
\begin{proposition}
Assume that $A$ satisfies~\eqref{eq:A-coercive}
and~\eqref{eq:A-bornee}. Then, the Green function $G$ of the operator
$-\dive\left(A\nabla \cdot\right)$ (namely the solution
to~\eqref{eq:def-G-faible}) satisfies
\begin{equation}
\label{eq:borne-G}
\exists C>0, \ \forall (x,y)\in \RR^d\times \RR^d, \ 
|G(x,y)| \leq
\begin{cases}
C\left( 1 + \left| \log|x-y|\right|\right) & \text{if } d=2, \\
  C|x-y|^{2-d} &\text{if } d>2.
\end{cases}
\end{equation}
\end{proposition}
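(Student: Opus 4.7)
The proposition is essentially a bookkeeping statement: it merely regroups the pointwise bounds already contained in the two existence-and-uniqueness theorems, distinguishing only according to the dimension. No new analytic work is required, so my plan is simply to split into cases and invoke the appropriate theorem.

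First I would observe that the coercivity and boundedness assumptions~\eqref{eq:A-coercive}--\eqref{eq:A-bornee} on $A$ are exactly the hypotheses of Theorems~\ref{th:existence-unicite} and~\ref{th:existence-unicite-2d}, so in either case we already have a Green function $G$ solving~\eqref{eq:def-G-faible} at our disposal.

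For $d \geq 3$, the estimate~\eqref{eq:G-positive} provided by Theorem~\ref{th:existence-unicite} reads $0 \leq G(x,y) \leq C |x-y|^{2-d}$, which immediately yields the second branch of~\eqref{eq:borne-G}. For $d=2$, the estimate~\eqref{eq:G-positive-2d} provided by Theorem~\ref{th:existence-unicite-2d} reads $|G(x,y)| \leq C\bigl(1+\bigl|\log|x-y|\bigr|\bigr)$, which is exactly the first branch of~\eqref{eq:borne-G}. Concatenating both cases gives the claim.

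There is no real obstacle here: the mathematical content is entirely located in the proofs of Theorems~\ref{th:existence-unicite} and~\ref{th:existence-unicite-2d}. The only thing worth flagging is that in dimension two the Green function is defined only up to an additive constant; however, the normalization~\eqref{eq:choice-0} used in the proof of Theorem~\ref{th:existence-unicite-2d} is precisely what makes the bound~\eqref{eq:G-positive-2d} hold with a constant independent of $y$, so the statement of the proposition is unambiguous.
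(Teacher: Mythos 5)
Your proposal is correct and coincides with the paper's own treatment: the proposition is stated there as an immediate consequence of collecting~\eqref{eq:G-positive} from Theorem~\ref{th:existence-unicite} and~\eqref{eq:G-positive-2d} from Theorem~\ref{th:existence-unicite-2d}, with no further argument. Your remark on the normalization~\eqref{eq:choice-0} in dimension two is a sensible clarification but does not change the substance.
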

As we pointed out in the introduction, this result is well-known for
bounded
domains~\cite{avellaneda-lin-87,dolzmann,gruter,kozlov,littmann}.
However, almost all results are limited to this case, except
for~\cite[Theorem 13]{avellaneda-lin-87}, for which "in spirit", the
domain is infinite due to the scaling with respect to $\varepsilon \to
0$. The articles~\cite{kozlov} and~\cite[Section 10]{littmann} also
consider the case of unbounded domains (see also a remark
following~\cite[Theorem~3.3]{gruter}), but do not consider the case $d=2$. 
Finally, the appendix of \cite{kenig-ni} treats the case of $\RR^2$.

\medskip

Next, we give results on the gradient of $G$.
\begin{proposition}
\label{pr:estimee-gradient}
Assume that $A$
satisfies~\eqref{eq:A-periodique},~\eqref{eq:A-holder},~\eqref{eq:A-coercive}
and~\eqref{eq:A-bornee}. Then the Green function $G$ associated with $L
= -\dive\left(A\nabla \cdot\right)$ satisfies the following estimates: 
\begin{gather}
\label{eq:estimee-gradient_x-G}
\exists C>0, \quad \forall x\in \RR^d, \quad \forall y \in \RR^d, \quad
|\nabla_x G(x,y) |\leq \frac{C}{|x-y|^{d-1}},
\\
\label{eq:estimee-gradient_y-G}
\exists C>0, \quad \forall x\in \RR^d, \quad \forall y \in \RR^d, \quad
|\nabla_y G(x,y) |\leq \frac{C}{|x-y|^{d-1}}.
\end{gather}
\end{proposition}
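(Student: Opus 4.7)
The plan is to combine the pointwise bounds on $G$ already established with the uniform interior Lipschitz estimate of Avellaneda-Lin for $L$-harmonic functions, which is valid because $A$ is periodic and Hölder continuous (assumptions~\eqref{eq:A-periodique} and~\eqref{eq:A-holder}). That estimate asserts that any $u$ satisfying $-\dive(A \nabla u) = 0$ in $B_R(x_0)$ obeys
$$
|\nabla u(x_0)| \leq \frac{C}{R} \sup_{B_R(x_0)} |u - c|
$$
for every constant $c \in \RR$, with $C$ depending only on $A$ (and in particular independent of $R$ and $x_0$). To prove~\eqref{eq:estimee-gradient_x-G}, I fix $x \neq y$ and set $R = |x-y|/2$: the function $G(\cdot, y)$ is $L$-harmonic on $B_R(x)$ (which does not contain $y$), so the above estimate applies with $u = G(\cdot, y)$.

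For $d \geq 3$, I choose $c = 0$ and insert the pointwise bound~\eqref{eq:G-positive}: since $|z-y| \geq R$ for $z \in B_R(x)$, we have $\sup_{B_R(x)} |G(\cdot, y)| \leq C R^{2-d}$, and therefore
$$
|\nabla_x G(x, y)| \leq \frac{C}{R} \, R^{2-d} \leq C' |x-y|^{1-d},
$$
which yields~\eqref{eq:estimee-gradient_x-G}.

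The case $d=2$ is more delicate, because~\eqref{eq:G-positive-2d} only gives $\sup_{B_R(x)} |G(\cdot, y)| \leq C(1 + |\log R|)$, which would introduce a spurious logarithmic factor. The key observation is that, even though $G(\cdot, y)$ itself grows like $|\log R|$, its \emph{oscillation} on the ball $B_R(x)$ should be uniformly bounded (as one checks on the constant coefficient case $A=\Id$, where $G(z,y)=-(2\pi)^{-1}\log|z-y|$ indeed oscillates by $O(1)$ on such a ball). The strategy is therefore to choose $c = G(x, y)$ in the Lipschitz estimate and prove the uniform oscillation bound
$$
\operatorname{osc}_{B_R(x)} G(\cdot, y) \leq C
$$
with $C$ independent of $x$ and $y$. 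I would obtain this by exploiting the annular $L^\infty$ bound~\eqref{eq:bousin} together with Caccioppoli's inequality (subtracting off a constant adapted to that annulus, to avoid a logarithmic loss) to control $\|\nabla_x G(\cdot, y)\|_{L^2}$ on annuli uniformly in $R$, and then using the interior De Giorgi-Nash-Moser regularity to convert this $L^2$ gradient bound into a pointwise oscillation bound on $G$.

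Finally,~\eqref{eq:estimee-gradient_y-G} is deduced from~\eqref{eq:estimee-gradient_x-G} by duality: the adjoint operator $L^* = -\dive(A^T \nabla \cdot)$ satisfies the same hypotheses as $L$, so the estimate on the $x$-gradient of the Green function $G^*$ of $L^*$ holds with the same constant, and the symmetry relation $G^*(y, x) = G(x, y)$ transforms this into the desired bound on $\nabla_y G$. The main obstacle in the whole argument is the uniform oscillation bound for $G$ in dimension two, where the naive pointwise bound on $G$ alone is insufficient and some care is needed in choosing the normalization constant when applying the Avellaneda-Lin estimate.
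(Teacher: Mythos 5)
Your argument for $d\geq 3$, and your duality argument deducing~\eqref{eq:estimee-gradient_y-G} from~\eqref{eq:estimee-gradient_x-G} via the adjoint Green function, coincide with the paper's proof (the interior Lipschitz estimate you invoke is \cite[Lemma 16]{avellaneda-lin-87}, used in exactly the same way). For $d=2$, however, you take a genuinely different route. The paper never proves an oscillation bound: it uses the dimension-lifting trick of \cite[Theorem 13]{avellaneda-lin-87}, forming the three-dimensional operator $\widetilde L=-\dive_x(A\nabla_x\cdot)-\partial_t^2$, applying the already-established $d=3$ gradient estimate to its Green function $\widetilde G$, integrating in $t$ to produce a candidate $\overline G$ with $|\nabla_x\overline G|\leq C|x-y|^{-1}$, and identifying $\overline G$ with $G$ up to a constant through the uniqueness part of Theorem~\ref{th:existence-unicite-2d}. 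Your alternative --- apply the Lipschitz estimate to $G(\cdot,y)-c$ after showing $\operatorname{osc}_{B_{|x-y|/2}(x)}G(\cdot,y)\leq C$ --- is more direct and avoids the whole limiting construction, but the step you leave vague is exactly the delicate one, and your sketch of it does not work as stated: \eqref{eq:bousin} only gives $|G|\leq C(1+|\log R|)$ on the annulus, so Caccioppoli's inequality with any a priori choice of constant yields $\|\nabla_x G\|^2_{L^2}\leq C(1+|\log R|)^2$ unless you already know the oscillation is $O(1)$, which is circular. The correct input is the scale-invariant gradient bound \eqref{eq:estimee-gradient-G-1} (coming from the weak-$L^2$ estimate \eqref{eq:bibi2}), which gives $\|\nabla_x G(\cdot,y)\|_{L^1(B_{4R}(y)\setminus B_{R/2}(y))}\leq CR$; the Sobolev--Poincar\'e--Wirtinger inequality on the annulus then gives $\|G-\overline{G}_A\|_{L^2}\leq CR$ for $\overline{G}_A$ the mean over the annulus, and Moser's local boundedness estimate (the same \cite[Theorem 2]{moser} step as in Step 3b of the proof of Theorem~\ref{th:existence-unicite-2d}, applied to $G-\overline{G}_A$ instead of $G$) yields $\sup|G-\overline{G}_A|\leq C$ on an inner annulus containing $B_{|x-y|/2}(x)$, hence the oscillation bound. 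With that repair your proof is complete and arguably more elementary; what the paper's route buys is that it recycles the $d\geq 3$ result verbatim and needs no two-dimensional oscillation analysis at all.
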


Similar results are given in~\cite[Theorem 3.3]{gruter}, in the case of
bounded domains. 

\begin{proof} 
We start with the case $d\geq 3$, and apply~\cite[Lemma
16]{avellaneda-lin-87} to $G$ as a function of $x$, which implies that
\begin{multline}
\label{eq:avellaneda}
\forall x \in \RR^d, \quad \forall y\in\RR^d, \quad \forall r < |x-y|,
\\ 
\|\nabla_x G(\cdot,y)\|_{L^\infty(B_{r/2}(x))} \leq \frac{C}{r} \,
\|G(\cdot, y)\|_{L^\infty(B_r(x))},
\end{multline}
where $C$ depends only on $\|A\|_{C^{0,\delta}}$, $\delta$, $\alpha$
and $d$. Using~\eqref{eq:borne-G}, we thus obtain
\begin{equation}
\label{eq:titi}
|\nabla_x G(x,y)| \leq \frac{C}{r} \sup_{z \in B_r(x)} 
\frac{1}{|z-y|^{d-2}}.
\end{equation}
Note that we have used $|\nabla_x
G(x,y)|\leq \|\nabla_x G(\cdot,y)\|_{L^\infty(B_{r/2}(x))}$, which is
true only almost everywhere. However, changing the function on a set of
measure zero if necessary, it is possible to assume that this inequality
holds everywhere.
Taking $r = |x-y|/2$, we have, for any $z \in B_r(x)$,
$$
|x-y| \leq |x-z| + |z-y| \leq r + |z-y| = \frac12 |x-y| + |z-y|.
$$
We hence deduce from~\eqref{eq:titi} that
$$
|\nabla_x G(x,y)| \leq \frac{C}{r} \ \left( \frac{2}{|x-y|} \right)^{d-2} 
= \frac{2^{d-1} \, C}{|x-y|^{d-1}}.
$$
This proves~\eqref{eq:estimee-gradient_x-G}.

Next, in order to prove~\eqref{eq:estimee-gradient_y-G}, we point out
that $G^\star(x,y) := G(y,x)$ is the Green function of the operator
$L^\star$ defined by
\begin{equation}
\label{eq:L-star}
L^\star u = -\dive\left(A^T\nabla u\right).
\end{equation}
A proof of this fact\footnote{The main idea of the proof consists in
  choosing the test function $\varphi(x) = G(z,x)$
  in~\eqref{eq:def-G-faible}, for any 
$z \in \RR^d$, and next multiplying~\eqref{eq:def-G-faible} by an
arbitrary function $f(y)$ and integrating over $y$.
However, as the function $G(z,\cdot)$ does not belong to
${\cal D}(\RR^d)$, some regularization arguments are in order.}
is given in~\cite[Theorem 1.3]{gruter}
and~\cite[Theorem 1]{dolzmann} in the case $d \geq 3$, and this proof
carries over to the case $d=2$. 
Hence, applying~\eqref{eq:estimee-gradient_x-G} to $G^\star$, we
deduce~\eqref{eq:estimee-gradient_y-G}.

\bigskip

We turn to the case $d=2$. The estimate~\eqref{eq:avellaneda} is
not sufficient here, since $G(x,y)$ is not bounded as $|x-y|\to \infty$.
Instead, we use the same trick as in the proof of~\cite[Theorem
13]{avellaneda-lin-87}, using~\eqref{eq:estimee-gradient_x-G} for $d=3$.
For this purpose, we introduce the operator $\widetilde L$ defined on
$H^1(\RR^3)$ by 
\begin{equation}
\label{eq:L-tilde}
\widetilde L u = -\dive_x\left(A(x)\nabla_x u\right) - \partial_t^2 u,
\end{equation}
where $x\in\RR^2$ and $t\in \RR$. Let $\widetilde G$ be the associated
Green function. According to the above proof and
to~\eqref{eq:borne-G}, we have 
$$
\left|\widetilde G(x,t,y,s)\right| \leq \frac C {|x-y| + |t-s|},
$$
and
\begin{equation}
\label{eq:grad-G-tilde}
\left|\nabla_x \widetilde G(x,t,y,s)\right| +
\left|\partial_t \widetilde G(x,t,y,s)\right| \leq \frac{C}{|x-y|^2 +
  (t-s)^2}.
\end{equation}

Next, we set, for any $x$ and $y$ in $\RR^2$, with $x \neq y$,
$$
G_\kappa(x,y) = \int_{-\kappa}^\kappa \widetilde G(x,t,y,0) dt.
$$
We deduce from~\eqref{eq:grad-G-tilde} that
\begin{equation}
\label{eq:grad}
\left|\nabla_x G_\kappa(x,y)\right| \leq C\int_{-\infty}^\infty
\frac{dt}{|x-y|^2 + t^2} = \frac{C\pi}{|x-y|},
\end{equation}
for a constant $C$ independent of $\kappa$, $x$ and $y$. Hence,
$\nabla_x G_\kappa$ is 
bounded in $L^p_{\rm loc}(\RR^2 \times \RR^2)$, uniformly with respect
to $\kappa$, for any $p<2$. Thus, for any $R>0$, extracting a
subsequence if necessary, $\nabla_x G_\kappa$ 
converges weakly in $(L^p(B_R \times B_R))^2$ to some $T\in
(L^p(B_R \times B_R))^2$. Now, we have, in the sense of
distribution,
$$
\partial_{x_1} \partial_{x_2} G_\kappa = \partial_{x_2}\partial_{x_1} G_\kappa.
$$
This property passes to the limit, so that $\partial_{x_1} T_2
= \partial_{x_2} T_1$. This implies that $T = \nabla_x
\overline{G}$ for some $\overline{G} \in
W^{1,p}(B_R \times B_R)$. We next point out that the limit
$\overline{G}$ does not depend on $R$, in the sense that if $R'>R$, then
$\overline G'$ defined on $B_{R'}\times B_{R'}$ as above satisfies
$\nabla_x \overline G' = \nabla_x \overline G.$
We thus have $\overline{G} \in L^\infty_{y,\rm
  loc}\left(\RR^2,W_{x,\rm loc}^{1,1} \left(\RR^2\right)\right)$.

Note also that~\eqref{eq:grad} implies that, for any $y \in \RR^2$, the
function $\nabla_x G_\kappa(\cdot,y)$ is
bounded in $L^p_{\rm loc}(\RR^2)$, uniformly with respect
to $\kappa$, for any $p<2$. Thus, for any bounded domain $B_R$, extracting a
subsequence if necessary, $\nabla_x G_\kappa(\cdot,y)$ 
converges weakly in $(L^p(B_R))^2$, and, by uniqueness, $\nabla_x
G_\kappa(\cdot,y)$ converges to $\nabla_x \overline{G}(\cdot,y)$ weakly in
$(L^p(B_R))^2$. 

At this point, $\overline{G}(\cdot,y)$ is only determined up to an
additive constant. We now fix this constant (and hence uniquely defined
$\overline{G}(\cdot,y)$) by assuming that
$$
\int_{B_1(y)} \overline{G}(x,y) \, dx = 0.
$$

\medskip

In the sequel, we show that $\overline{G}$ satisfies all the properties of
Theorem~\ref{th:existence-unicite-2d}. By uniqueness of the Green
function $G$ up to an additive constant, we will obtain that
$\overline{G} = G$ up to a constant. We will then deduce bounds on
$\nabla G$ from the bounds we have on $\nabla \overline{G}$. 

\medskip

We first show that $\overline{G}$ satisfies~\eqref{eq:def-G-faible}.
Consider $\varphi \in {\cal D}(\RR^2)$ and $\psi \in {\cal
  D}(\RR)$. Considering the test function $\psi(t) \varphi(x)$
in~\eqref{eq:def-G-faible}, we see that the Green function $\widetilde
G$ satisfies the weak formulation
\begin{multline*}
\int_{\RR^3} 
\psi(t) \left( \nabla \varphi(x) \right)^T A(x) 
\nabla_x \widetilde G(x,t,y,0) \, dx dt 
\\
+
\int_{\RR^3} 
\varphi(x) \, \psi'(t) \, \partial_t \widetilde G(x,t,y,0) \, dx dt
=
\varphi(y) \psi(0).
\end{multline*}
Consider $\psi$ such that $\psi(t) = 1$ whenever $|t| \leq \kappa$, $\psi(t)
= 0$ whenever $|t| \geq 1+\kappa$, and 
$\max(\| \psi \|_{L^\infty(\RR)},\| \psi' \|_{L^\infty(\RR)}) \leq 1$. We
have
\begin{equation}
\label{eq:weak_R}
\int_{\RR^2} 
\left( \nabla \varphi(x) \right)^T A(x) \nabla_x G_\kappa(x,y) 
\, dx 
+
e_1(\kappa) 
+ 
e_2(\kappa)
=
\varphi(y),
\end{equation}
with
\begin{eqnarray*}
e_1(\kappa) &=& \int_{\RR^2} \int_{\kappa \leq |t| \leq 1+\kappa}
\psi(t) \left( \nabla \varphi(x) \right)^T A(x) 
\nabla_x \widetilde G(x,t,y,0) \, dx dt, 
\\
e_2(\kappa) &=& \int_{\RR^2} \int_{\kappa \leq |t| \leq 1+\kappa}
\varphi(x) \, \psi'(t) \, \partial_t \widetilde G(x,t,y,0) \, dx dt.
\end{eqnarray*} 
Let us now bound from above $e_1$ and $e_2$. 
Using~\eqref{eq:grad-G-tilde}, and introducing a compact $K \subset \RR^2$ containing
the support of $\varphi$, we have
\begin{eqnarray*}
\left| e_1(\kappa) \right| & \leq & \| \psi \|_{L^\infty} 
\| A \|_{L^\infty} \| \nabla \varphi \|_{L^\infty} 
\int_{K} \int_{\kappa \leq |t| \leq 1+\kappa}
\left| \nabla_x \widetilde G(x,t,y,0) \right| \, dx dt
\\
& \leq & \| \psi \|_{L^\infty} 
\| A \|_{L^\infty} \| \nabla \varphi \|_{L^\infty} 
\int_{K} \int_{\kappa \leq |t| \leq 1+\kappa} \frac{C}{|x-y|^2 + t^2} \, dx dt
\\
& \leq & \| \psi \|_{L^\infty} 
\| A \|_{L^\infty} \| \nabla \varphi \|_{L^\infty} 
\ \mu(K) \ \frac{C}{\kappa^2}.
\end{eqnarray*}
Hence, $e_1(\kappa)$ vanishes when $\kappa \to \infty$.
Likewise, $e_2(\kappa)$ also vanishes when $\kappa \to \infty$. Passing to the
limit $\kappa \to \infty$ in~\eqref{eq:weak_R}, and using that $\nabla_x
G_\kappa(\cdot,y)$ weakly converges to $\nabla_x \overline{G}(\cdot,y)$, 
we deduce that, for any 
$\varphi \in {\cal D}(\RR^2)$, we have 
$$
\int_{\RR^2} 
\left( \nabla \varphi(x) \right)^T A(x) \nabla_x \overline{G}(x,y) 
\, dx 
=
\varphi(y).
$$
We have thus obtained that the function $\overline{G} \in L^\infty_{y,\rm
  loc}\left(\RR^2,W_{x,\rm loc}^{1,1} \left(\RR^2\right)\right)$ 
satisfies~\eqref{eq:def-G-faible}. Assume now that $\overline{G}$ also
satisfies~\eqref{eq:G-positive-2d}. Then, according to the uniqueness
of $G$ (see Theorem~\ref{th:existence-unicite-2d}), we have 
$\nabla_x G = \nabla_x \overline{G}$. 

In turn, we deduce from~\eqref{eq:grad} that
\begin{equation}
\label{eq:toto}
\left| \nabla_x G(x,y)\right| 
=
\left| \nabla_x \overline{G}(x,y)\right| \leq \frac{C\pi}{|x-y|}.
\end{equation}
This hence proves the estimate~\eqref{eq:estimee-gradient_x-G} in the
case $d=2$. 

To prove~\eqref{eq:estimee-gradient_y-G} in the case $d=2$, we again use
the fact that $G(y,x)$ is the Green function of $L^\star$ defined
by~\eqref{eq:L-star}, so the estimate~\eqref{eq:estimee-gradient_x-G}
that we have just shown implies~\eqref{eq:estimee-gradient_y-G}. 

\medskip

There only remains to prove that $\overline G$ satisfies
\eqref{eq:G-positive-2d}. To this end, we note that~\eqref{eq:toto}
implies the estimate \eqref{eq:estimee-gradient-G-1}, for $\Omega$ a
ball or an annulus of the form $B_{2R} \setminus B_R$. Hence, the end of
the proof of Theorem~\ref{th:existence-unicite-2d} applies here, leading
from~\eqref{eq:estimee-gradient-G-1} to~\eqref{eq:bousin}, which implies
that $\overline G$ satisfies~\eqref{eq:G-positive-2d}.
\end{proof}

\begin{remark}
The above arguments indicate two different proofs for the existence of $G$
in dimension two: the first one consists in defining the Green function
on the bounded domain $B_R$, and then letting $R\to\infty$, as it is
done in the proof of Theorem~\ref{th:existence-unicite-2d}. The
second strategy uses the three-dimensional Green function
$\widetilde{G}$ of the operator $\widetilde{L}$ defined by
\eqref{eq:L-tilde}. One 
integrates $\widetilde{G}$ with respect to the third variable, finding a
Green function for the operator $L$ in dimension two. This approach is
used in the proof of Proposition~\ref{pr:estimee-gradient}.

Note also that Proposition~\ref{pr:estimee-gradient} is proved under
stronger assumptions than Theorem~\ref{th:existence-unicite-2d}.
\end{remark}

We next prove upper bounds on $\nabla_x \nabla_y G$.
\begin{proposition}
\label{pr:estimee-gradient-croise}
Assume that $A$
satisfies~\eqref{eq:A-periodique},~\eqref{eq:A-holder},~\eqref{eq:A-coercive}
and~\eqref{eq:A-bornee}. Then the Green function $G$ associated with $L
= -\dive\left(A \nabla \cdot\right)$ satisfies the following estimate:
\begin{equation}
\label{eq:estimee-gradient-croise}
\exists C>0, \quad \forall x\in \RR^d, \quad \forall y \in \RR^d, \quad
|\nabla_x \nabla_y G(x,y) |\leq \frac{C}{|x-y|^d}.
\end{equation}
\end{proposition}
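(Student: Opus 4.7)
The plan is to combine the Avellaneda--Lin Lipschitz estimate~\eqref{eq:avellaneda} already invoked in the proof of Proposition~\ref{pr:estimee-gradient} with the bound~\eqref{eq:estimee-gradient_y-G} on $\nabla_y G$, by applying the Lipschitz estimate not to $G(\cdot,y)$ itself but to its $y$-derivative. Concretely, fix $y\in\RR^d$, fix an index $i\in\{1,\dots,d\}$, and consider the function $v(x) = \partial_{y_i} G(x,y)$ for $x\neq y$. Formally differentiating~\eqref{eq:def-G-faible} with respect to $y_i$ gives
\begin{equation*}
-\dive_x\bigl(A(x)\nabla_x v(x)\bigr) = 0 \quad \text{in } \RR^d\setminus\{y\},
\end{equation*}
so that $v$ is a weak solution of $Lv=0$ in any ball $B_r(x)$ with $r<|x-y|$. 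The Lipschitz estimate~\cite[Lemma~16]{avellaneda-lin-87} (which uses the periodicity~\eqref{eq:A-periodique} and H\"older continuity~\eqref{eq:A-holder} of $A$) then yields
\begin{equation*}
\|\nabla_x v\|_{L^\infty(B_{r/2}(x))} \leq \frac{C}{r}\,\|v\|_{L^\infty(B_r(x))}.
\end{equation*}
Choosing $r=|x-y|/2$, we bound the right-hand side using~\eqref{eq:estimee-gradient_y-G}: for every $z\in B_r(x)$, $|z-y|\geq |x-y|-r = |x-y|/2$, so
\begin{equation*}
\|v\|_{L^\infty(B_r(x))} \leq \sup_{z\in B_r(x)} \frac{C}{|z-y|^{d-1}} \leq \frac{2^{d-1}C}{|x-y|^{d-1}}.
\end{equation*}
Combining these two estimates gives $|\partial_{x_j}\partial_{y_i} G(x,y)| \leq C/|x-y|^d$, and summing over $i,j$ yields~\eqref{eq:estimee-gradient-croise}.

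The one technical point that deserves care is the rigorous justification that $\partial_{y_i} G(\cdot,y)$ may indeed be treated as a weak solution of $L u=0$ in $\RR^d\setminus\{y\}$ and estimated through~\eqref{eq:avellaneda}. This is the main obstacle, since a priori we have bounds only on $\nabla_y G$, not on its regularity in $x$. The cleanest way to proceed is to work with the difference quotients $u_h(x) = h^{-1}\bigl(G(x,y+he_i) - G(x,y)\bigr)$ for small $h>0$: each $u_h$ satisfies $L u_h = h^{-1}(\delta_{y+he_i}-\delta_y)$ in the sense of distributions, and therefore $L u_h = 0$ in the open set $\RR^d\setminus\bigl([y,y+he_i]\bigr)$. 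Applying~\eqref{eq:avellaneda} to $u_h$ on $B_r(x)$ with $r = |x-y|/4$ and $|h|$ small enough (so that the segment $[y,y+he_i]$ stays outside $B_r(x)$), and using~\eqref{eq:estimee-gradient_y-G} to bound
\begin{equation*}
\|u_h\|_{L^\infty(B_r(x))} \leq \sup_{z\in B_r(x)}\sup_{0\leq s\leq 1}\bigl|\partial_{y_i}G(z,y+she_i)\bigr| \leq \frac{C}{|x-y|^{d-1}},
\end{equation*}
one obtains a uniform-in-$h$ bound $\|\nabla_x u_h\|_{L^\infty(B_{r/2}(x))}\leq C/|x-y|^d$. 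Passing to the limit $h\to 0$ (using the weak convergence of $\nabla_x u_h$ to $\nabla_x\partial_{y_i}G(\cdot,y)$ in $L^q_{\rm loc}$, which follows by the same compactness argument already used in the proofs of Theorems~\ref{th:existence-unicite} and~\ref{th:existence-unicite-2d}) yields the pointwise bound on $\nabla_x\partial_{y_i}G(x,y)$ and concludes the proof.

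The argument works uniformly in dimension $d\geq 2$, since~\eqref{eq:estimee-gradient_y-G} holds for every such $d$ and~\cite[Lemma~16]{avellaneda-lin-87} is also valid in every dimension; no separate treatment of $d=2$ is needed here.
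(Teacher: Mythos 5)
Your proposal is correct and follows essentially the same route as the paper: observe that $\nabla_y G(\cdot,y)$ solves $Lu=0$ away from $y$, apply the Avellaneda--Lin Lipschitz estimate \cite[Lemma~16]{avellaneda-lin-87} on $B_{r}(x)$ with $r=|x-y|/2$, and bound the right-hand side via~\eqref{eq:estimee-gradient_y-G}. The only difference is that you additionally justify, via difference quotients, why $\partial_{y_i}G(\cdot,y)$ may be treated as a weak solution --- a point the paper asserts without detail --- which is a welcome refinement but not a different argument.
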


Here again, similar results for the Green function in a {\em bounded}
domain are given in the literature, for instance in~\cite[Theorem
3.3]{gruter}.

\begin{proof} 
We have, in the sense of distribution,
$$
-\dive_x\left(A(x)\nabla_x \nabla_y G(x,y)\right) = 0 \quad 
\text{in $B_\delta(y)^C$, for any $\delta > 0$}.
$$
We can thus apply~\cite[Lemma 16]{avellaneda-lin-87}, and obtain, as
in~\eqref{eq:avellaneda}, that
\begin{multline*}
\forall x \in \RR^d, \quad \forall y\in\RR^d, \quad \forall r < |x-y|,
\\ 
\|\nabla_x \nabla_y G(\cdot,y)\|_{L^\infty(B_{r/2}(x))} \leq \frac{C}{r} \,
\|\nabla_y G(\cdot, y)\|_{L^\infty(B_r(x))}.
\end{multline*}
Using~\eqref{eq:estimee-gradient_y-G}, we
deduce~\eqref{eq:estimee-gradient-croise}. 
\end{proof}

Using arguments similar to those used to prove
Propositions~\ref{pr:estimee-gradient}
and~\ref{pr:estimee-gradient-croise}, we also show the following result 
on the Green function $G_R$ of the operator $-\dive\left(A\nabla
  \cdot\right)$ on the bounded domain $B_R$ with homogeneous Dirichlet
boundary conditions. The interest of this result is the
independence of the obtained bounds with respect to the size of the
domain $B_R$. 

\begin{proposition}
\label{pr:estimee-gradient-R}
Assume that $A$
satisfies~\eqref{eq:A-periodique},~\eqref{eq:A-holder},~\eqref{eq:A-coercive}
and~\eqref{eq:A-bornee}. Let 
$G_R$ be the Green function of the operator $-\dive\left(A\nabla
  \cdot\right)$ on $B_R$ with homogeneous Dirichlet boundary conditions
(namely, $G_R$ is the unique solution to~\eqref{eq:def-G-faible-R} with
the boundary condition $G_R(x,y) = 0$ if $|x|=R$).

Then, there exists a constant $C$ such that, for any $R>0$, 
\begin{eqnarray}
\label{eq:estimee-gradient_x-GR}
\forall (x,y) \in B_R \times B_R, \quad
|\nabla_x G_R(x,y) | \leq \frac{C}{|x-y|^{d-1}},
\\
\label{eq:estimee-gradient_y-GR}
\forall (x,y) \in B_R \times B_R, \quad
|\nabla_y G_R(x,y) | \leq \frac{C}{|x-y|^{d-1}},
\\
\label{eq:estimee-gradient_xy-GR}
\forall (x,y) \in B_R \times B_R, \quad
|\nabla_x\nabla_y G_R(x,y) | \leq \frac{C}{|x-y|^{d}}.
\end{eqnarray}
\end{proposition}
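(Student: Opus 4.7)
The plan is to mimic, almost verbatim, the arguments in the proofs of Propositions~\ref{pr:estimee-gradient} and~\ref{pr:estimee-gradient-croise}. Two ingredients come with constants independent of the domain and can therefore be reused in the bounded setting: the Avellaneda--Lin interior Lipschitz estimate~\cite[Lemma~16]{avellaneda-lin-87}, whose constant depends only on $d$, $\alpha$, $\|A\|_{L^\infty}$ and $\|A\|_{C^{0,\delta}}$; and the a priori bound~\eqref{eq:borne-G-3} on $G_R$, whose constant is explicitly stated to be independent of $R$.

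To establish~\eqref{eq:estimee-gradient_x-GR} for $d\geq 3$, I distinguish two cases. When $\dist(x,\partial B_R) \geq |x-y|/2$, I pick $\rho = |x-y|/4$ so that $B_\rho(x) \subset B_R \setminus \{y\}$, apply~\cite[Lemma~16]{avellaneda-lin-87} on $B_\rho(x)$ to $G_R(\cdot,y)$, and invoke~\eqref{eq:borne-G-3} on $B_\rho(x)$; the bound then follows exactly as in the proof of Proposition~\ref{pr:estimee-gradient}. When $\dist(x,\partial B_R) < |x-y|/2$, I use the boundary analog of~\cite[Lemma~16]{avellaneda-lin-87}, applied to $G_R(\cdot,y)$ on $B_\rho(x) \cap B_R$ with its vanishing Dirichlet datum on $\partial B_R$; the same combination with~\eqref{eq:borne-G-3} concludes. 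For $d=2$, I repeat the three-dimensional lift used in the proof of Proposition~\ref{pr:estimee-gradient}: I introduce the Green function of the operator $\widetilde L$ of~\eqref{eq:L-tilde} on the cylinder $B_R\times\RR$ with homogeneous Dirichlet data on $\partial B_R\times\RR$ (obtained as a monotone limit of Green functions on finite cylinders $B_R\times(-L,L)$), apply the just-established three-dimensional bound~\eqref{eq:estimee-gradient_x-GR} to it, and integrate in $t$ as in~\eqref{eq:grad}; the resulting function coincides with $G_R$ up to an additive constant, so the estimate transfers.

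The remaining two bounds are then routine. For~\eqref{eq:estimee-gradient_y-GR}, the function $(x,y) \mapsto G_R(y,x)$ is the Green function of $L^\star = -\dive(A^T\nabla\cdot)$ on $B_R$ with the same homogeneous Dirichlet condition (the proof of~\cite[Theorem~1.3]{gruter} adapts without change), so applying~\eqref{eq:estimee-gradient_x-GR} to it immediately gives the claim. For~\eqref{eq:estimee-gradient_xy-GR}, I apply~\cite[Lemma~16]{avellaneda-lin-87}, in its interior or boundary form depending on the position of $x$ relative to $\partial B_R$, to $\nabla_y G_R(\cdot,y)$, which solves $-\dive_x(A\nabla_x\cdot)=0$ on $B_R\setminus\{y\}$; the $L^\infty$ norm on the relevant ball is then controlled by~\eqref{eq:estimee-gradient_y-GR}, yielding the desired bound.

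The main obstacle is the treatment of points $x$ close to $\partial B_R$: the naive interior estimate from~\cite[Lemma~16]{avellaneda-lin-87} fails there, since any ball of radius comparable to $|x-y|$ around such an $x$ protrudes outside $B_R$. Closing the argument requires a boundary Lipschitz estimate with constant uniform in $R$. This is possible because $\partial B_R$ has smoothness uniform in $R$ (its curvature $1/R$ only makes the estimate easier as $R$ grows, while for small $R$ classical Schauder estimates apply directly); making explicit this boundary analog of~\cite[Lemma~16]{avellaneda-lin-87}, and checking that the Avellaneda--Lin compactness scheme still yields an $R$-independent Lipschitz constant on $B_R$, is the technical heart of the proof.
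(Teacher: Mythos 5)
The paper gives no written proof of Proposition~\ref{pr:estimee-gradient-R} --- it only asserts that the result follows by arguments similar to those of Propositions~\ref{pr:estimee-gradient} and~\ref{pr:estimee-gradient-croise} --- and your outline is precisely that adaptation (interior Lipschitz estimate of~\cite[Lemma 16]{avellaneda-lin-87} combined with~\eqref{eq:borne-G-3}, duality via $L^\star$ for the $y$-gradient, iteration for the mixed derivative, and the three-dimensional lift on the cylinder $B_R\times\RR$ for $d=2$), so the approaches coincide. Your observation that points $x$ with $\dist(x,\partial B_R)<|x-y|/2$ require the \emph{boundary} analogue of~\cite[Lemma 16]{avellaneda-lin-87} with constant uniform in $R$ --- which Avellaneda and Lin do supply for $C^{1,\alpha}$ domains, the balls $B_R$ having uniformly controlled boundary geometry for $R$ bounded away from $0$ and classical Schauder theory covering small $R$ --- correctly pinpoints the one step that the paper's ``similar arguments'' glosses over, and is accurately flagged (rather than fully carried out) as the technical heart of the argument.
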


\section{Extensions}
\label{sec:extensions}

First, it should be noted that, assuming further regularity on the
coefficients of the matrix $A$, it is possible to prove more precise
decaying properties of the Green function. This was proved
in~\cite{sevo}.

Next, it is clearly possible to adapt the technique of~\cite{dolzmann}
and~\cite{fuchs} (see also~\cite{hildebrandt-1,hildebrandt-2}) to treat
the case of {\em systems} of elliptic PDEs. This case is also considered
in~\cite{avellaneda-lin-87}. 

Another question is the extension of the present results to the case of
non-periodic coefficients. This is, for instance, what is done
in~\cite{gruter} and~\cite{dolzmann}, in the case of bounded
domains. However, some of the estimates we have used here (in
particular~\eqref{eq:avellaneda}) do rely on the fact that the matrix is
periodic. Thus, the extension is not straightforward.

Finally, it should be possible to extend our results to the case of
piecewise H\"older coefficients. For instance, gradient estimates for
elliptic equations with such discontinuous coefficients are derived
in~\cite{li-vogelius}. It is probably possible to use them in the
setting of the current article, but such a work remains to be done.

\section*{Acknowledgments} 

FL gratefully acknowledges the support from EOARD under Grant
FA8655-10-C-4002 and the support from ONR under Grant 
N00014-09-1-0470.

\end{document}